\keywords{Noncentral commutative transitivity, quasi-identity, locally
residually-$\mathbb{Z}$, diagram, Lame Property.}
\subjclass{Primary 05C38, 15A15; Secondary 05A15, 15A18}
\theoremstyle{plain} 
\def\dprod{\mathop{\displaystyle \prod }}%
\def\dbigcap{\mathop{\displaystyle \bigcap }}%
\def\dbigwedge{\mathop{\displaystyle \bigwedge }}%
\def\dcoprod{\mathop{\displaystyle \coprod }}%
\def\func#1{\mathop{\rm #1}}%
\begin{document}

\title[Heisenberg Group]{An Axiomatization for the Universal Theory \\of the
Heisenberg Group}

\author[A.~M.~Gaglione]{Anthony M. Gaglione}
\address{Departmant of Mathematics\\
U.S. Naval Academy\\
Annapolis, MD 21402}
\email{agagaglione@aol.com}

\author[D.~Spellman]{Dennis Spellman}
\address{Department of Statistics\\
Temple University\\
Philadelphia, PA 19122}
\email{dennisspellman1@aol.com}

\dedicatory{This paper is dedicated to the memory of Benjamin Fine.}

\begin{abstract}
The Heisenberg group, here denoted $H$, is the group of all $3\times 3$
upper unitriangular matrices with entries in the ring $\mathbb{Z}$ of
integers. A.G. Myasnikov posed the question of whether or not the universal
theory of $H$, in the language of $H$, is axiomatized, when the models are
restricted to $H$-groups, by the quasi-identities true in $H$ together with
the assertion that the centralizers of noncentral elements be abelian. Based
on earlier published partial results we here give a complete proof of a
slightly stronger result.
\end{abstract}

\maketitle

\section{Introduction}

A (multiplicatively written) group $G$ is 
\emph{commutative transitive}, briefly CT, provided the relation of
commutativity is transitive on $G\backslash \{1\}$; equivalently, provided
the centralizer of every element $g\neq 1$ is abelian.


Noncyclic free groups are universally equivalent, even
elementarily equivalent. Myasnikov and Remeslennikov [MR] proved that their
universal theory is axiomatized by the quasi-identities they satisfy
together with commutative transitivity. Fixing a noncyclic free group $F$,
they proved the analogous result in the language of $F$ when the models
are restricted to $F$-groups.


Let $A$ be a countably infinite set well-ordered as%
\begin{equation*}
\{a_{1},a_{2},...,a_{n},...\}=\{a_{n+1}:~n<\omega \}
\end{equation*}%
where $\omega $ is the first limit ordinal which we take as the set of
nonnegative integers provided with its natural order. Let $F_{\omega }(%
\mathcal{N}_{2})$ be the group free in the variety of all 2-nilpotent groups
on the generators $A$. For each integer $n\geq 2$ let $F_{n}(\mathcal{N}%
_{2}) $ be the subgroup of $F_{\omega }(\mathcal{N}_{2})$ generated
(necessarily freely) by the initial segment $\{a_{1},a_{2},...,a_{n}\}$ of $%
A $. The \emph{Heisenberg group} is the group $H$ of all $3\times 3$
upper unitriangular matrices with entries in the ring $\mathbb{Z}$ of
integers. It is free in $\mathcal{N}_{2}$ on the generators%
\begin{equation*}
a_{1}=%
\begin{bmatrix}
1 & 0 & 0 \\ 
0 & 1 & 1 \\ 
0 & 0 & 1%
\end{bmatrix}%
\ \ \text{and} \ \ a_{2}=%
\begin{bmatrix}
1 & 1 & 0 \\ 
0 & 1 & 0 \\ 
0 & 0 & 1%
\end{bmatrix}%
\end{equation*}%
(See \cite{GS2}). We take the liberty of identifying $F_{2}(\mathcal{N}_{2})$
with $H$.


Now $F_{\omega }(\mathcal{N}_{2})$ is \emph{discriminated} by
the family of retractions $F_{\omega }(\mathcal{N}_{2})\rightarrow H$. That
means that given finitely many elements $f_{1},...,f_{k}\in F_{\omega }(\mathcal{N}_{2})\backslash \{1\}$ there is a retraction $F_{\omega }(\mathcal{N}_{2})\rightarrow H$ which doesn't annihilate any of them. ($H$ 
\emph{discriminates } $\mathcal{N}_{2}$ in the sense of Hanna Neumann
\cite{N}.) From this it follows that $F_{n}(\mathcal{N}_{2})$, $n\geq 2$, are
universally equivalent; moreover, since the discrimination is done by
retractions, they are universally equivalent in the language of $H$. (See
\cite{GS1}).

 
 Let CT(1) or \emph{noncentral commutative transitivity}, briefly
NZCT, be the property that the relation of commutativity be transitive on $G\backslash Z(G)$ where $Z(G)$ is the center of $G$. Equivalently, NZCT
asserts that the centralizers of noncentral elements are abelian. A special
case of a question posed by A.G. Myasnikov is whether or not the universal
theory of noncyclic free 2-nilpotent groups is axiomatized by the
quasi-identities they satisfy together with NZCT and whether or not that
theory in the language of $H~$\ is so axiomatized when the models are
restricted to $H$-groups.

 
 In this paper, in the case of the language of $H$, we answer that
question in the positive. In fact we prove a slightly stronger result. The
remainder of this paper contains four \ additional sections. In Section 2
we fix definitions and notation. In Section 3 
we prove the main result. In
Section 4 
we ponder but do not settle the
question in the language without parameters from $H$. Finally in Section 5
we suggest problems for future research.

 
 Before closing the introduction we note that the variety $\mathcal{A}^{2}$ of metabelian groups is discriminated by its rank $2$ free group. From
this it follows that the noncyclic free metabelian groups are universally
equivalent. It is worth mentioning in passing that Remeslenikov and Stohr
proved in \cite{RS} that the universal theory of the noncyclic free metabelian
groups is axiomatized by the quasi-identities they satisfy together with
commutative transitivity.

\medskip

\section{Pedantic Preliminaries}

\medskip\ Let $L_{0}$ be the first order language with equality containing a
binary operation symbol $\cdot $ , a unary operational symbol $^{-1}$ and a
constant symbol $\widehat{1}.$ If $G$ is a (multiplicatively written) group $L_{0}[G]$ is obtained from $L_{0}$ by adjoining names $\widehat{g}$ for the
elements $g\in G\backslash \{1\}$ as new constant symbols. We find it
convenient to commit the "abuses" of identifying $\widehat{g}$ with $g$ for
all $g\in G$ and replacing $\cdot $ with juxtaposition. Moreover, we find it
convenient to write an inequation $\sim (s=t)$ as $s\neq t$.

 
 An \emph{identity}, in $L_{0}[G]$ \ ({Note}: $L_{0}=L[\{1\}]$) is a universal sentence of the form $\forall \overline{x}~(s(\overline{x})=t(\overline{x}))$ where $\overline{x}$ is a tuple of
variables and $s(\overline{x})$ and $t(\overline{x})$ are terms of $L_{0}[G]$. Examples of identities are the group axioms, namely:%
\begin{eqnarray*}
\forall x_{1},x_{2},x_{3}~((x_{1}x_{2})x_{3}~ &=&~x_{1}(x_{2}x_{3})) \\
\forall x~(x1) &=&x \\
\forall x~(x~x^{-1})\ &=&\ 1).
\end{eqnarray*}%
A \emph{quasi-identity} of $L_{0}[G]$ is universal sentence of the form%
\begin{equation*}
\forall~\overline{x}~\left( \dbigwedge\limits_{i=1}^{k}~(~s_{i}(~\overline{x%
})=t_{i}(~\overline{x}))\rightarrow \ (s(~\overline{x})=t(~\overline{x}%
))\right)
\end{equation*}%
where $~\overline{x}$ is a tuple of variables and the $s_{i}(~\overline{x})$%
, $t_{i}(~\overline{x}),~\ s(~\overline{x})$ and $t(~\overline{x})$ are
terms of $L_{0}[G]$. Every identity is equivalent to a quasi-identity since $%
\forall \overline{x}~(s(\overline{x})=t(\overline{x}))$ is equivalent to 
\begin{equation*}
\forall \overline{x},y~((y=y)\rightarrow ~(s(\overline{x})=t(\overline{x}))).
\end{equation*}%
In particular, the group axioms are equivalent to quasi-identities.

 
 If $G$ is a group and we let $\mathcal{Q}^{0}(G)$ be the set of all
quasi-identities of $L_{0}$ true in $G$ and $\mathcal{Q}(G)$ be the set of
all quasi-identities of $L_{0}[G]$ true in $G$. We view the group axioms as
contained in $\mathcal{Q}^{0}(G)\subseteq \mathcal{Q}(G)$. We let $%
Th_{\forall }^{0}(G)$ be the set of all universal sentences of $L_{0}$ true
in $G$ and $Th_{\forall }(G)$ be the set of all universal sentences of $%
L_{0}[G]$ true in $G$. \ Note that quantifier free sentences are viewed as
special cases of universal sentences. In particular, the \emph{diagram}
of $G$, briefly $\text{diag}(G)$, consisting of the atomic and negated atomic
sentences of $L_{0}[G]$ true in $G$ is a set of universal sentences of $%
L_{0}[G]$.

 
 A \emph{$G$-group} $\Gamma $ is a model of the group axioms and $\text{diag}(G)$. That is equivalent to the group $\Gamma $ containing a
distinguished copy of $G$ as a subgroup. A \emph{$G$-polynomial} is a
group word on the elements of $G$ and variables. (If you like, an element of
the free product $G\ast \langle x_{1},...,x_{n};~\rangle $ for some $n$.)
Note that, modulo the group axioms, every identity of $L_{0}$ ($L_{0}[G]$ )
is equivalent to one of the form $\forall \overline{x}~\left( w(\overline{x}%
)=1\right) $ where $w(\overline{x})$ is a group word ($G$-polynomial) and
every quasi-identity of $L_{0}$ ($L_{0}[G]$ ) is equivalent to one of the
form%
\begin{equation*}
\forall ~\overline{x}~\left( \dbigwedge\limits_{i=1}^{k}~(~u_{i}(~\overline{x%
})=1)\rightarrow \ (w(~\overline{x})=1)\right)
\end{equation*}%
where the $u_{i}(\overline{x})$ and $w(~\overline{x})$ are group words ($G$%
-polynomials).

 
 In this paper by "ring" we shall always mean commutative ring with
multiplicative identity $1\neq 0$. Subrings are required to contain $1$ and
homomorphisms are required to preserve $1$. A ring $R$ is \emph{residually}-$\mathbb{Z}$ provided, given $r\in R\backslash \{0\}$, there is
a homomorphism $R\rightarrow \mathbb{Z}$ which does not annihilate $r$. This
forces $R$ to have characteristic zero and we identify the minimum subring
of $R$ with \ $\mathbb{Z}$.  Hence, we view $R$ as separated by
retractions $R\rightarrow \mathbb{Z}$.  A ring is \emph{locally
residually-$\mathbb{Z}$ } provided every finitely generated subring is
residually-$\mathbb{Z}$ . \ Being locally residually-$\mathbb{Z}$ is
equivalent to being a model of the quasi-identities of ring theory true in $\mathbb{Z}$ (See \cite{GS2}).

 
 It was proven in \cite{GS2} that every model of $\mathcal{Q}(H)\cup
\text{diag}(H) $ $H$-embeds in $UT_{3}(R)$ for some locally residually-$\mathbb{Z}$
ring $R$. (Conversely every $H$-subgroup of such a $UT_{3}(R)$ is a model of
\ $\mathcal{Q}(H)\cup \text{diag}(H)$.) Here an \emph{$H$-embedding} is an
embedding which is the identity on $H$. (The meanings of \emph{$H$-subgroup} and \emph{$H$-homomorphism} in the category of $H$-groups
are readily apparent.)

 
 Let $G$ be a group and $g\in G$. We let $C_{G}(g)$ be the centralizer
of $g$ in $G$. NZCT is the following universal sentence of $L_{0}:$%
\begin{equation*}
\forall x_{1},x_{2},x_{3},y~\left( (([x_{2},y]\neq 1)\wedge
([x_{1},x_{2}]=1)\wedge ([x_{2},x_{3}]=1))\rightarrow
([x_{1},x_{3}]=1)\right) .
\end{equation*}%
A group $G$ satisfies NZCT if and only if $C_{G}(g)$ is abelian for all $g\in G\backslash Z(G)$. The following quasi-identity of $L_{0}[H]$ holds in $H$.%
\begin{equation*}
\forall x,z~\left( (([z,a_{1}]=1)\wedge ([a_{2},z]=1))\rightarrow
([z,x]=1)\right) .
\end{equation*}%
It follows that, if $G$ is any model of $\mathcal{Q}(H)\cup \text{diag}(H)$, then $%
C_{G}(a_{1})\cap C_{G}(a_{2})=Z(G)$. In particular, if $G$ additionally
satisfies NZCT, then the following universal sentence $\tau $ of $L_{0}[H]$
is a consequence%
\begin{equation*}
\forall x_{1},x_{2}~\left( 
\begin{array}{c}
(([x_{2},x_{1}]=1)\wedge ([a_{2},x_{2}]=1)\wedge
([x_{1},a_{1}]=1)~\rightarrow \\ 
(([x_{2},a_{1}]=1)\vee ([a_{2},x_{1}]=1))%
\end{array}%
\right) .
\end{equation*}%
We shall prove in the next section that $\mathcal{Q}(H)\cup \text{diag}(H)\cup
\{\tau \}$ axiomatizes $Th_{\forall }(H)$.

 
 A \emph{primitive sentence} of $L_{0}[H]$ is ($\func{mod}$ulo
the group axioms) an existential sentence of the form $\exists ~\overline{x}%
~\left( \dbigwedge\limits_{i}~(p_{i}(~\overline{x})=1)~\wedge
(~\dbigwedge\limits_{j}(q_{j}(~\overline{x})\neq 1)\right) $ where the $%
p_{i}(~\overline{x})$ and $q_{j}(~\overline{x})$ are $H$-polynomials.

 
 By writing the matrix of an existential sentence in disjunctive
normal form one sees that every existential sentence of $L_{0}[H]$ \ is
equivalent (modulo the group axioms) to a disjunction of primitive sentences
and so holds in an $H$-group if and only if at least one disjunct does.

 
 Assume momentarily that there is a universal sentence $\varphi $ of $%
L_{0}[H]$ which holds in $H$ but is not a consequence of $\mathcal{Q}(H)\cup
\text{diag}(H)\cup \{\tau \}$. Then its negation $\sim \varphi $ must hold in some
model $\ G$ of $\mathcal{Q}(H)\cup \text{diag}(H)\cup \{\tau \}$. Since $\sim
\varphi $ is equivalent to an existential sentence of $L_{0}[H]$ there must
be a primitive sentence 
\begin{equation*}
\exists x_{1},...,x_{k}~\left(
\dbigwedge\limits_{i}~(p_{i}(~a_{1},a_{2},x_{1},...,x_{k})=1)~\wedge
(~\dbigwedge\limits_{j}(q_{j}(a_{1},a_{2},x_{1},...,x_{k})\neq 1)\right)
\end{equation*}%
of $L_{0}[H]$ which holds in $G$ but is false in $H$. Let the assignment $%
x_{\lambda }\mapsto g_{\lambda }$, $1\leq \lambda \leq k$, verify the above
sentence in $G$. Then the finitely generated $H$-subgroup $G_{0}=~\langle
a_{1},a_{2},g_{1},...g_{k}\rangle $ of $G$ also satisfies the above
primitive sentence \ and, since universal sentences of $L_{0}[H]$ are
preserved in $H$-subgroups, $G_{0}$ is a model of $\mathcal{Q}(H)\cup
\text{diag}(H)\cup \{\tau \}$. Hence, if a counterexample exists, then so would a
finitely generated counterexample exist. So it suffices to prove the result
for finitely generated models. We shall find it convenient to prove, more
generally, that the result holds for models $G$ such that the quotient $%
\overline{G}=G/Z(G)$ is finitely generated.

\medskip

\section{The Lame Property and the Universal Theory of $H$}

\medskip

 
 It was shown in \cite{GS4} using a characterization due to Mal'cev \cite{M}
that a model $E$ of $\mathcal{Q}(H)\cup \text{diag}(H)$ is of the form $UT_{3}(R)$
for some locally residually-$\mathbb{Z}$ ring $\ R$ if and only if $E$
satisfies the following universal-existential sentence

$\sigma $ of $L_{0}[H]:$%
\begin{equation*}
\forall x_{1},x_{2}\exists y_{1},y_{2}~\left( 
\begin{array}{c}
([y_{1},a_{1}]=1)\wedge ([a_{2},y_{2}]=1)\wedge ([x_{2},x_{1}]=[y_{2},a_{1}])
\\ 
\wedge ~(~[x_{2},x_{1}]=[a_{2},y_{1}])%
\end{array}%
\right) .
\end{equation*}%
 
 In other words, for each commutator $[g_{2},g_{1}]$, each of the
systems%
\begin{equation*}
S~\left\{ 
\begin{array}{c}
\lbrack a_{2},y]=1 \\ 
\lbrack y,a_{1}]=[g_{2},g_{1}]%
\end{array}%
\right.
\end{equation*}%
and%
\begin{equation*}
T~\left\{ 
\begin{array}{c}
\lbrack x,a_{1}]=1 \\ 
\lbrack a_{2},x]=[g_{2},g_{1}]%
\end{array}%
\right.
\end{equation*}%
has a solution in $E$.

 
 Mimicking the construction of an existentially closed extension (See
e.g. Hodges [H]) we may embed a model $G$ of \ $\mathcal{Q}(H)\cup
\text{diag}(H)\cup \{\tau \}$ into $UT_{3}(R)$, If we could preserve $\tau $ at
each step then, since universal sentences are preserved in direct unions, $%
UT_{3}(R)$ would also satisfy $\tau $. If $UT_{3}(R)$ satisfies $\tau $,
then $R$ is an integral domain. To see this let $(r,s)\in R^{2}$ and suppose 
$rs=0$. Let $y=%
\begin{bmatrix}
1 & r & 0 \\ 
0 & 1 & 0 \\ 
0 & 0 & 1%
\end{bmatrix}%
$ and $x=%
\begin{bmatrix}
1 & 0 & 0 \\ 
0 & 1 & s \\ 
0 & 0 & 1%
\end{bmatrix}%
$. Then $[y,x]=1$ and $[a_{2},y]=1$ and $[x,a_{1}]=1$. Moreover, $[y,a_{1}]=%
\begin{bmatrix}
1 & 0 & r \\ 
0 & 1 & 0 \\ 
0 & 0 & 1%
\end{bmatrix}%
$ and $[a_{2},x]=%
\begin{bmatrix}
1 & 0 & s \\ 
0 & 1 & 0 \\ 
0 & 0 & 1%
\end{bmatrix}%
$. \ Since $UT_{3}(R)$ satisfies $\tau $, $r=0$ or $s=0$ and $R$ indeed is
an integral domain.

 
 A residually-$\mathbb{Z}$ ring $R$ is \emph{$\omega $-residually-%
$\mathbb{Z}$} provided it is discriminated by the family of retractions $%
R\rightarrow \mathbb{Z}$ . That is, given finitely many nonzero elements of $%
R$, there is a retraction $R\rightarrow \mathbb{Z}$ which does not
annihilate any of them. That is equivalent to $R$ being an integral domain.
Suppose first that $R$ is an integral domain. Let $r_{1},...,r_{n}$ be
finitely many nonzero elements of $R$. Then the product $r=$ $r_{1}\cdots
r_{n}\neq 0$ and there is a retraction $\rho :R\rightarrow \mathbb{Z}$ which
does not annihilate $r$ so cannot annihilate and of $r_{1},...,r_{n}$.
Conversely, if $R$ is $\omega $-residually-$\mathbb{Z}$ and $r$ and $s$ are
nonzero elements of $R$, then there is a retraction $\rho :R\rightarrow 
\mathbb{Z}$ such that $\rho (r)\neq 0$ and $\rho (s)\neq 0$; so, $\rho
(rs)=\rho (r)\rho (s)\neq 0$ and hence $rs\neq 0$ and $R$ must be an
integral domain. From this it follows that a locally residually-$\mathbb{Z}$
ring $R$ is locally $\omega $-residaully-$\mathbb{Z}$ if and only if $R$ is
an integral domain. So, if $UT_{3}(R)$ satisfies $\tau $, then $R$ is
locally $\omega $-residually-$\mathbb{Z}$ .

 
 Given $G\leq _{H}UT_{3}(R)$, $G$ is the direct union $\underset{%
\rightarrow }{\lim }~(G\cap UT_{3}(R_{0}))$ as $R_{0}$ varies over the
finitely generated subrings of $R$. Ring retractions $R_{0}\rightarrow 
\mathbb{Z}$ induce group retractions $G\cap UT_{3}(R_{0})\rightarrow H$.
From this it follows that each $G\cap UT_{3}(R_{0})$ is discriminated by the
family of group retractions $G\cap UT_{3}(R_{0})\rightarrow H$ and hence
each $G\cap UT_{3}(R_{0})$ is a model of $Th_{\forall }(H)$. Since universal
sentences are preserved in direct unions and $G=$ $\underset{\rightarrow }{%
\lim }~(G\cap UT_{3}(R_{0}))$ we have that $G$ is a model of $Th_{\forall
}(H)$. The bottom line is that we would be finished if we could construct an
overgroup $UT_{3}(R)$ in such a way that $\tau $ is preserved at each step
of the construction.

 
 Let us forget momentarily about this particular $R$ and consider a
possible property that a representation $G\leq _{H}UT_{3}(R)$ might satisfy.

\smallskip

\begin{defi}
Let $G$ be a model of \ $\mathcal{Q}(H)\cup \text{diag}(H)$ and let $G\leq
_{H}UT_{3}(R)$ where $R$ is locally residually-$\mathbb{Z}$. We say the
representation satisfies the \emph{Lame Property} provided, for each $g=%
\begin{bmatrix}
1 & g_{12} & g_{13} \\ 
0 & 1 & g_{23} \\ 
0 & 0 & 1%
\end{bmatrix}%
\in C_{G}(a_{1})\cup C_{G}(a_{2})$, either $g_{12}^{2}+g_{23}^{2}=0$ or $%
g_{12}^{2}+g_{23}^{2}$ is not a zero divisor in $R$.
\end{defi}

\smallskip

\begin{lem}
Given a model $G$ of $\mathcal{Q}(H)\cup \text{diag}(H)$ and a representation $%
G\leq _{H}UT_{3}(R)$ where $R$ locally residually-$\mathbb{Z}$. The
representation satisfies the Lame Property if and only if it satisfies the
conjunction of the following two conditions:
\end{lem}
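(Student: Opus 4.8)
The plan is to reduce the stated equivalence to one explicit computation: that of the centralizers of $a_1$ and $a_2$ inside $UT_3(R)$. First I would recall that in $UT_3(R)$ every commutator lies in the centre, and that, writing $g_{12},g_{23}$ for the $(1,2)$- and $(2,3)$-entries of a matrix $g\in UT_3(R)$, the $(1,3)$-entry of $[g,g']$ is, up to sign, the alternating bilinear form $g_{12}g'_{23}-g'_{12}g_{23}$. Since $a_1$ has $(2,3)$-entry $1$ and all other off-diagonal entries $0$, while $a_2$ has $(1,2)$-entry $1$ and all other off-diagonal entries $0$, this yields at once
\[
C_{UT_3(R)}(a_1)=\{\,g\in UT_3(R):g_{12}=0\,\},\qquad
C_{UT_3(R)}(a_2)=\{\,g\in UT_3(R):g_{23}=0\,\}.
\]
Because the distinguished copy of $H$ is contained in $G\leq_H UT_3(R)$, both $a_1$ and $a_2$ lie in $G$, so $C_G(a_i)=G\cap C_{UT_3(R)}(a_i)$; hence every $g\in C_G(a_1)$ has $g_{12}=0$ and every $g\in C_G(a_2)$ has $g_{23}=0$.

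Granting this, for $g\in C_G(a_1)$ the quantity $g_{12}^{2}+g_{23}^{2}$ reduces to $g_{23}^{2}$, and for $g\in C_G(a_2)$ it reduces to $g_{12}^{2}$. Now the Lame Property is the single universal assertion that the clause ``$g_{12}^{2}+g_{23}^{2}=0$, or $g_{12}^{2}+g_{23}^{2}$ is not a zero divisor in $R$'' holds for every $g$ in the union $C_G(a_1)\cup C_G(a_2)$, and a universally quantified statement over a union is equivalent to the conjunction of that statement over each of the two pieces. Substituting the two reductions just noted turns those two pieces into the two displayed conditions. On the overlap $C_G(a_1)\cap C_G(a_2)=Z(G)$ one has $g_{12}=g_{23}=0$, so the clause there holds via its first disjunct and the two halves are mutually consistent. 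Should the two conditions be recorded in terms of $g_{23}$ and $g_{12}$ rather than their squares, I would additionally invoke the elementary remark that $s^{2}$ is a zero divisor in $R$ exactly when $s$ is: if $st=0$ with $t\neq 0$ then $s^{2}t=0$, and conversely the non-zero-divisors form a multiplicative set.

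I do not anticipate a real obstacle here. The only genuine content is the commutator/centralizer computation in $UT_3(R)$; after that the argument is the formal splitting of a universal quantifier over a union together with the substitutions $g_{12}=0$ on $C_G(a_1)$ and $g_{23}=0$ on $C_G(a_2)$. The single point demanding care is to keep the identification of $H$ — and hence the particular elements $a_1$ and $a_2$ — fixed inside both $G$ and $UT_3(R)$ throughout, so that $C_G(a_1)$ and $C_G(a_2)$ really are the subgroups cut out by $g_{12}=0$ and $g_{23}=0$, rather than conjugates of them.
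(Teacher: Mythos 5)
Your setup is correct and is essentially the paper's argument in streamlined form: the centralizer computation $C_{UT_3(R)}(a_1)=\{g_{12}=0\}$, $C_{UT_3(R)}(a_2)=\{g_{23}=0\}$, the splitting of the universal quantifier over $C_G(a_1)\cup C_G(a_2)$, and the remark that $s^2$ is a (non-)zero divisor exactly when $s$ is, all appear (explicitly or implicitly) in the paper's proof, and your single-square reduction even simplifies its converse direction, which the paper runs with the sum of two squares. The gap is at the sentence ``Substituting the two reductions just noted turns those two pieces into the two displayed conditions.'' After substitution, the piece on $C_G(a_2)$ reads: for every $g\in C_G(a_2)$, either $g_{12}^2=0$ or $g_{12}^2$ is not a zero divisor. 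Condition (1) asserts instead that for every \emph{noncentral} such $g$ the entry $g_{12}$ is not a zero divisor. Your overlap remark handles the restriction to noncentral elements, and your elementary remark converts ``not a zero divisor'' between $g_{12}$ and $g_{12}^2$, but neither disposes of the first disjunct: if $g$ is noncentral (so $g_{12}\neq 0$) yet $g_{12}^2=0$, the Lame clause holds at $g$ while (1) fails, since $g_{12}\cdot g_{12}=0$ exhibits $g_{12}$ as a zero divisor. So the forward implication does not follow by pure substitution; indeed it is false over a general commutative ring (e.g.\ with $\epsilon\neq 0$, $\epsilon^{2}=0$, adjoining to $H$ the unitriangular matrix with $(1,2)$-entry $\epsilon$ yields a representation satisfying the Lame Property for which (1) fails).

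This is precisely where the hypothesis that $R$ is locally residually-$\mathbb{Z}$ --- which your proposal never invokes --- must enter: the quasi-identity $\forall x\,((x^{2}=0)\rightarrow(x=0))$ holds in $\mathbb{Z}$, hence in $R$, so $R$ has no nonzero nilpotents and a noncentral $g\in C_G(a_2)$ has $g_{12}^{2}\neq 0$, forcing the second disjunct; symmetrically for $C_G(a_1)$ and $g_{23}$. This is exactly the step the paper makes explicitly (it also uses $\forall x,y\,((x^{2}+y^{2}=0)\rightarrow (x=0))$ and its companion for the converse, which your reduction renders unnecessary). Adding that one line closes the gap; the rest of your argument stands.
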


\textit{\ \smallskip (1.) \ \ For all }$y=%
\begin{bmatrix}
1 & y_{12} & y_{13} \\ 
0 & 1 & 0 \\ 
0 & 0 & 1%
\end{bmatrix}%
\in C_{G}(a_{2})\backslash Z(G)$\textit{, }$~y_{12}$\textit{\ is not a zero
divisor in }$R$\textit{.}

\textit{(2.) For all }$x=%
\begin{bmatrix}
1 & 0 & x_{13} \\ 
0 & 1 & x_{23} \\ 
0 & 0 & 1%
\end{bmatrix}%
\in C_{G}(a_{1})\backslash Z(G)$\textit{, }$x_{23}$\textit{\ is not a zero
divisor in }$R$\textit{.}

\smallskip

\begin{proof}
Suppose the representation satisfies the Lame Property. Let $y=%
\begin{bmatrix}
1 & y_{12} & y_{13} \\ 
0 & 1 & 0 \\ 
0 & 0 & 1%
\end{bmatrix}%
\in C_{G}(a_{2})\backslash Z(G)$. Then $y_{12}\neq 0.$

 
 The quasi-identity $\forall x~((x^{2}=0)\rightarrow (x=0))$ holds in $%
\mathbb{Z}$ ; hence, it is true in $R$ and $y_{12}^{2}\neq 0$. \ If $r\neq 0$
annihilates $y_{12}$ then $r(y_{12}^{2}+y_{23}^{2})=ry_{12}^{2}=0$
contradicting $y_{12}^{2}=y_{12}^{2}+y_{23}^{2}$ is not a zero divisor in $R$%
. The contradiction shows that the representation satisfies (1.). Similarly,
the representation satisfies (2.).

 
 Now suppose that representation satisfies (1.) and (2.). Let $g=%
\begin{bmatrix}
1 & g_{12} & g_{13} \\ 
0 & 1 & g_{23} \\ 
0 & 0 & 1%
\end{bmatrix}%
\in C_{G}(a_{1})\cup C_{G}(a_{2})$. $\mathbb{Z}$ satisfies each of the
quasi-identities%
\begin{eqnarray*}
\forall x,y~((x^{2}+y^{2} &=&0)\rightarrow (x=0))\text{ and} \\
\forall x,y~(x^{2}+y^{2} &=&0)\rightarrow (y=0));\text{ so,}
\end{eqnarray*}%
they must hold in $R$. Hence, if $g_{12}^{2}+g_{23}^{2}\neq 0$, then either $%
g_{12}\neq 0$ or $g_{23}\neq 0$.

Suppose there were an $r\neq 0$ which annihilates $g_{12}^{2}+g_{23}^{2}$.
From $r(g_{12}^{2}+g_{23}^{2})=0$ we get $%
(rg_{12})^{2}+(rg_{23})^{2}=r^{2}(g_{12}^{2}+g_{23}^{2})=0$. Hence $%
rg_{12}=0 $ and $rg_{23}=0$. If $g_{12}\neq 0$ (1.) is contradicted while if 
$g_{23}\neq 0$ (2.) is contradicted. The contradiction shows that the
conjunction of (1.) and (2,) implies the Lame Property. 
\end{proof}

\smallskip

 
 We next note that if the representation $G\leq _{H}UT_{3}(R)$
satisfies the Lame Property, then $G$ satisfies $\tau $.

 
 For suppose $[a_{2},y]=1$ so $y=%
\begin{bmatrix}
1 & y_{12} & y_{13} \\ 
0 & 1 & 0 \\ 
0 & 0 & 1%
\end{bmatrix}%
\in C_{G}(a_{2})$, $[x,a_{1}]=1$ so $x=%
\begin{bmatrix}
1 & 0 & x_{13} \\ 
0 & 1 & x_{23} \\ 
0 & 0 & 1%
\end{bmatrix}%
\in C_{G}(a_{1})$ and $[y,x]=%
\begin{bmatrix}
1 & 0 & y_{12}x_{23} \\ 
0 & 1 & 0 \\ 
0 & 0 & 1%
\end{bmatrix}%
=1=%
\begin{bmatrix}
1 & 0 & 0 \\ 
0 & 1 & 0 \\ 
0 & 0 & 1%
\end{bmatrix}%
$. Then $y_{12}x_{23}=0$. But if $y_{12}\neq 0$, then $x_{23}=0$ otherwise
(1.) is contradicted while if $x_{23}\neq 0$, then $y_{12}=0$ otherwise (2.)
is contradicted. It follows that either $[y,a_{1}]=1$ or $[a_{2},x]=1$ so $%
\tau $ holds in $G$. For a fixed representation $G\leq _{H}UT_{3}(R)$
satisfying \ the Lame Property is a sufficient condition for $\tau $ to hold
in $G$; however, it is not a necessary condition for $G$ to satisfy $\tau $.
(None the less we shall subsequently see that having at least one
representation satisfying the Lame Property is necessary and sufficient.)

 
 The result (proven in \cite{GS4}) that every $3$-generator model of $%
\mathcal{Q}(H)\cup \text{diag}(H)$ is already a model of the $Th_{\forall }(G)$
provides a treasure trove of counterexamples.

 
 Let $G$ be a model of $\mathcal{Q}(H)\cup \text{diag}(H)$ and let $R$ be a
locally residually-$\mathbb{Z}$ ring. \ We say that $R$ is \emph{appropriate for $G$} provided

\begin{enumerate}\item
$G\leq _{H}UT_{3}(R)$ \ and
\item
$R$ is generated by the entries of the elements of $G$.
\end{enumerate}
\smallskip

 
 Now $R=\mathbb{Z}\times \mathbb{Z=Z}e_{1}~+~\mathbb{Z}e_{2}$ where $%
e_{1}=(1,0)$ and $e_{2}=(0,1)$ is residually-$\mathbb{Z}$ . Consider the $3$%
-generator subgroup $G\leq _{H}UT_{3}(R)$ generated by%
\begin{equation*}
a_{1}=%
\begin{bmatrix}
1 & 0 & 0 \\ 
0 & 1 & 1 \\ 
0 & 0 & 1%
\end{bmatrix}%
~,~~a_{2}=%
\begin{bmatrix}
1 & 1 & 0 \\ 
0 & 1 & 0 \\ 
0 & 0 & 1%
\end{bmatrix}%
~~\text{and }b=%
\begin{bmatrix}
1 & 0 & 0 \\ 
0 & 1 & e_{1} \\ 
0 & 0 & 1%
\end{bmatrix}%
.
\end{equation*}%
Since $1-e_{1}=e_{2}$ , $R=\mathbb{Z}\times \mathbb{Z}$ is generated by the
entries of the elements of $G$. Since $e_{1}e_{2}=0$, $e_{1}$ is a zero
divisor in $R$.

 
 Now $b\in C_{G}(a_{1})\backslash Z(G)$ and $b_{23}=e_{1}$ is a zero
divisor in $R$. Hence, the representation violates the Lame Property. Every $%
3$-generator model of $\mathcal{Q}(H)\cup \text{diag}(H)$ is already a model of $%
Th_{\forall }(H)$. So this $G=\langle a_{1},a_{2},b\rangle $ satisfies $\tau 
$. Now this $G$ is obtained from $H$ by extending $C_{H}(a_{1})$ introducing
a new parameter. (It is a rank $1$ centralizer extension relative to the
category $\boldsymbol{N}_{2}$ of $2$-nilpotent groups.)

 
 Let $\theta $ be an indeterminate over $\mathbb{Z}$ . Then the
polynomial ring $\mathbb{Z}[\theta ]$ is residually-$\mathbb{Z}$ and we
could have just as well embedded this $G$ into $UT_{3}($ $\mathbb{Z}[\theta
] $ ) as the subgroup generated by%
\begin{equation*}
a_{1}=%
\begin{bmatrix}
1 & 0 & 0 \\ 
0 & 1 & 1 \\ 
0 & 0 & 1%
\end{bmatrix}%
~,~~a_{2}=%
\begin{bmatrix}
1 & 1 & 0 \\ 
0 & 1 & 0 \\ 
0 & 0 & 1%
\end{bmatrix}%
~~\text{and }b=%
\begin{bmatrix}
1 & 0 & 0 \\ 
0 & 1 & \theta \\ 
0 & 0 & 1%
\end{bmatrix}%
.
\end{equation*}%
Since $\theta $ is an entry of an element of $G$, $\mathbb{Z}[\theta ]$ is
also appropriate for $G$. Since $\mathbb{Z}[\theta ]$ is an integral domain,
the representation does satisfy the Lame Property.

 
 Anticipating an application to be used later in this paper, suppose $%
G_{0}$ is a model of $\mathcal{Q}(H)\cup \text{diag}(H)$ and let $a_{i}$ be a free
generator of $H$, $i\in \{1,2\}$. Suppose $G$ is obtained from $G_{0}$ by
extending $C_{G_{0}}(a_{i})$. That is%
\begin{equation*}
G=\langle G_{0},t;~rel(G_{0}),~[t,C_{G_{0}}(a_{i})]=1\rangle _{\mathcal{N}%
_{2}}.
\end{equation*}%
Using a big powers argument, we get a discriminating family of retractions $%
G\rightarrow G_{0}$ via%
\begin{equation*}
\left\{ 
\begin{array}{c}
g\mapsto g~,~g\in G_{0} \\ 
t\mapsto a_{i}^{n}~,~n\in \mathbb{Z}%
\end{array}%
\right. .
\end{equation*}%
It follows that $G$ is universally equivalent to $G_{0}$.

 
 Now suppose we fix a model $G_{0}$ of $\mathcal{Q}(H)\cup \text{diag}(H)\cup
\{\tau \}$ which admits a representation $G_{0}\leq _{H}UT_{3}(R)$
satisfying the Lame Property. Let $(g_{1},g_{2})\in G_{0}^{2}$ and suppose
the system%
\begin{equation*}
S\left\{ 
\begin{array}{c}
\lbrack a_{2},y]=1 \\ 
\lbrack y,a_{1}]=[g_{2},g_{1}]%
\end{array}%
\right.
\end{equation*}%
has no solution in $G_{0}$. Let $[g_{2},g_{1}]=z=%
\begin{bmatrix}
1 & 0 & z_{13} \\ 
0 & 1 & 0 \\ 
0 & 0 & 1%
\end{bmatrix}%
$. Let $Y$ be the element $%
\begin{bmatrix}
1 & z_{13} & 0 \\ 
0 & 1 & 0 \\ 
0 & 0 & 1%
\end{bmatrix}%
$ of $UT_{3}(R)$. Then $[a_{2},Y]=1$ and $[Y,a_{1}]=[g_{2},g_{1}]$ so $%
Y\notin G_{0}$.

 
 Let $G_{1}$ be the subgroup $\langle G_{0},Y\rangle $ of $UT_{3}(R)$.
Collecting and simplifying we see a typical element of $G_{1}$ has the form $%
uY^{n}~[Y,w]$ where $n\in \mathbb{Z}$ and $(u,w)\in G_{0}^{2}$. The matrix
representing this element has the form $%
\begin{bmatrix}
1 & u_{12}+nz_{13} & \ast \\ 
0 & 1 & u_{23} \\ 
0 & 0 & 1%
\end{bmatrix}%
$. 
Now suppose \[C=
\begin{bmatrix}
1 & c_{12}+nz_{13} & \ast \\ 
0 & 1 & 0 \\ 
0 & 0 & 1
\end{bmatrix}
\in C_{G_{1}}(a_{2})\setminus Z(G_2)\ \ \text{and} \ \ 
B=
\begin{bmatrix}
1 & 0 & \ast \\ 
0 & 1 & b_{23} \\ 
0 & 0 & 1
\end{bmatrix}
\in C_{G_{1}}(a_{1})\setminus Z(G_1).\]
 Assume further that $[C,B]=1$. Then $%
\begin{bmatrix}
1 & 0 & (c_{12}+nz_{13})b_{23} \\ 
0 & 1 & 0 \\ 
0 & 0 & 1%
\end{bmatrix}%
=%
\begin{bmatrix}
1 & 0 & 0 \\ 
0 & 1 & 0 \\ 
0 & 0 & 1%
\end{bmatrix}%
$ so \ $(c_{12}+nz_{13})b_{23}=0$. Now, $\ c_{12}+nz_{13}\neq 0$ and $%
b_{23}\neq 0$. Moreover, $B$ has the form $u[Y,w]$ with $u\in
C_{G_{0}}(a_{1})$ looking like $%
\begin{bmatrix}
1 & 0 & \ast \\ 
0 & 1 & b_{23} \\ 
0 & 0 & 1%
\end{bmatrix}%
$. \ Since $b_{23}$ is a zero divisor in $R$ that contradicts \ that the
representation $G_{0}$ satisfies the Lame Property. Hence,
if $C\in C_{G_1}(a_2), B\in C_{G_1}(a_1)$ and $[C,B]=1$, then  either $c_{12}+nz_{13}=0$ or $b_{23}=0$ so either $[C,a_{1}]=1$ or $[a_{2},B]=1$ and $G_{1}$ satisfies $\tau $.

 Similarly, if the system%
\begin{equation*}
T\left\{ 
\begin{array}{c}
\lbrack x,a_{1}]=1 \\ 
\lbrack a_{2},x]=[g_{2},g_{1}]%
\end{array}%
\right.
\end{equation*}%
has no solution in $G_{0}$ we can extend to a model of $\mathcal{Q}(H)\cup
\text{diag}(H)\cup \{\tau \}$ in which $T$ has a solution.

 
 Getting back to $G_{1}$, suppose the system%
\begin{equation*}
T\left\{ 
\begin{array}{c}
\lbrack x,a_{1}]=1 \\ 
\lbrack a_{2},x]=[g_{2},g_{1}]%
\end{array}%
\right.
\end{equation*}%
has no solution in $G_{1}$. Suppose further that $G_{1}$ admits a
representation $G_{1}\leq _{H}UT_{3}(R)$ satisfying the Lame Property. Then
we can extend $G_{1}$ to a model $G$ of $\mathcal{Q}(H)\cup \text{diag}(H)\cup
\{\tau \}$ in which $T$ has a solution. We have the chain $G_{0}\leq
G_{1}\leq G$. $G$ is a model of $\mathcal{Q}(H)\cup \text{diag}(H)\cup \{\tau \}$
in which the system%
\begin{equation*}
S\cup T\left\{ 
\begin{array}{c}
\begin{array}{l}
\lbrack a_{2},y]=1 \\ 
\lbrack y,a_{1}]=[g_{2},g_{1}]%
\end{array}
\\ 
\begin{array}{l}
\lbrack x,a_{1}]=1 \\ 
\lbrack a_{2},x]=[g_{2},g_{1}]%
\end{array}%
\end{array}%
\right.
\end{equation*}%
has a solution. So, if every model $G$ of $\mathcal{Q}(H)\cup \text{diag}(H)\cup
\{\tau \}$ has at least one representation satisfying the Lame Property,
then $G$ embeds in $UT_{3}(R)$ where $R$ is an integral domain and we are
finished. To prove that every model of $\mathcal{Q}(H)\cup \text{diag}(H)\cup
\{\tau \}$ admits a representation satisfying the Lame Property, we need
some results from model theory.

 
 Let $\mathbb{M}$ be the model class operator. We first paraphrase a
result from Bell and Slomson \cite{BS}.

\smallskip

\begin{prop}
Let $L$ be a first order language with equality. Let $K$ be the class of all 
$L$structures and let $X\subseteq K$. Then there is a set $S$ of sentences
of $L$ such that $X=\mathbb{M}(S)$ if and only if $X$ is closed under
isomorphism and ultraproducts and $K\backslash X$ is closed under
ultraprowers.
\end{prop}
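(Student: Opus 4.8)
The plan is to prove the two implications separately; all the real content sits in the ``if'' direction, and the one substantial external input is the Keisler--Shelah isomorphism theorem (itself established in \cite{BS}).

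For the ``only if'' direction, suppose $X=\mathbb{M}(S)$. Closure under isomorphism is immediate, since the truth of a sentence is an isomorphism invariant. Closure under ultraproducts is \L o\'s's theorem: if $A_{i}\models S$ for every $i\in I$ and $\mathcal{U}$ is an ultrafilter on $I$, then for each $\varphi\in S$ the set $\{i:A_{i}\models\varphi\}$ is all of $I$, hence in $\mathcal{U}$, so $\prod_{\mathcal{U}}A_{i}\models\varphi$. For the complement, if $A\notin X$ then $A\not\models\varphi$ for some $\varphi\in S$; an ultrapower $A^{I}/\mathcal{U}$ is elementarily equivalent to $A$ (again by \L o\'s's theorem), so $A^{I}/\mathcal{U}\not\models\varphi$ and thus $A^{I}/\mathcal{U}\notin X$. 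Hence $K\setminus X$ is closed under ultrapowers.

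For the ``if'' direction I would take $S=\mathrm{Th}(X):=\{\varphi:A\models\varphi\text{ for all }A\in X\}$. Then $X\subseteq\mathbb{M}(S)$ trivially, so it remains to show $\mathbb{M}(S)\subseteq X$. Fix $B\models S$. First note that the complete theory $\mathrm{Th}(B)$ is finitely satisfiable in $X$: if $\psi_{1},\dots,\psi_{n}\in\mathrm{Th}(B)$ had no common model in $X$, then $\sim(\psi_{1}\wedge\cdots\wedge\psi_{n})$ would lie in $\mathrm{Th}(X)=S$, forcing $B\models\sim(\psi_{1}\wedge\cdots\wedge\psi_{n})$, which contradicts $\psi_{i}\in\mathrm{Th}(B)$. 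Now let $I$ be the set of finite subsets of $\mathrm{Th}(B)$, choose for each $\Delta\in I$ some $A_{\Delta}\in X$ with $A_{\Delta}\models\Delta$, and let $\mathcal{U}$ be any ultrafilter on $I$ containing all the sets $\{\Delta'\in I:\Delta\subseteq\Delta'\}$ (which have the finite intersection property). By \L o\'s's theorem the ultraproduct $C:=\prod_{\mathcal{U}}A_{\Delta}$ satisfies every sentence of $\mathrm{Th}(B)$, i.e.\ $C\equiv B$; and since $X$ is closed under ultraproducts, $C\in X$.

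It remains to pass from $C\equiv B$ and $C\in X$ to $B\in X$, and this is precisely where the hypothesis on $K\setminus X$ is needed. By Keisler--Shelah there are an index set $J$ and an ultrafilter $\mathcal{D}$ on $J$ with $C^{J}/\mathcal{D}\cong B^{J}/\mathcal{D}$. Closure of $X$ under ultrapowers (a special case of closure under ultraproducts) gives $C^{J}/\mathcal{D}\in X$, and then closure under isomorphism gives $B^{J}/\mathcal{D}\in X$. If $B\notin X$, then $B\in K\setminus X$, which is closed under ultrapowers, so $B^{J}/\mathcal{D}\in K\setminus X$ --- contradicting $B^{J}/\mathcal{D}\in X$. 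Therefore $B\in X$, and $\mathbb{M}(S)=X$. The step I expect to carry all the weight is the appeal to Keisler--Shelah; everything else is routine manipulation with \L o\'s's theorem and the definition of $\mathrm{Th}(X)$. It is worth noting that this is also exactly the place where the complement condition cannot be dropped: closure of $X$ under isomorphism and ultraproducts by itself need not force closure under elementary equivalence.
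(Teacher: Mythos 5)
Your argument is correct. Note, however, that the paper does not actually prove this Proposition: it is quoted as a paraphrase of a result in Bell--Slomson \cite{BS}, with a remark that the proof there used the Generalized Continuum Hypothesis and that Shelah's \cite{S} improvement of Keisler's ultrapower theorem allows GCH to be omitted. What you have written is the standard Frayne--Morel--Scott/Keisler-style argument, and it is exactly the GCH-free route the paper's remark alludes to: taking $S=\mathrm{Th}(X)$, using finite satisfiability of $\mathrm{Th}(B)$ in $X$ plus an ultrafilter on the finite subsets to manufacture $C\in X$ with $C\equiv B$ (closure under ultraproducts), and then invoking the Keisler--Shelah isomorphism theorem together with closure of $X$ under ultrapowers and isomorphism, and of $K\setminus X$ under ultrapowers, to conclude $B\in X$. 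The easy direction via \L o\'s's theorem is as expected. So your proposal supplies a complete proof where the paper only cites the literature, and it correctly locates the two essential ingredients: Keisler--Shelah for transferring elementary equivalence to an isomorphism of ultrapowers, and the complement condition as the device that upgrades closure under ultraproducts to closure under elementary equivalence; your closing observation that this last condition cannot be dropped is also accurate.
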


\begin{rem}
The proof in \cite{BS} needed the Generalized Continuum Hypothesis. In view of
Shelah's \cite{S} improvement of Keisler's ultrapower theorem that hypothesis may
be omitted.
\end{rem}

\smallskip

The next result may be found in Hodges [H].

\smallskip

\begin{prop}
Let $L$ be a first order language with equality. Let $T$ be a set of
sentences of $L$. Let $T_{\forall }$ be the set of all universal sentences
of $L$ which are logical consequences of $T$. Then $\mathbb{M}(T_{\forall })$
consists of all $L$-substructures of models of $T$.
\end{prop}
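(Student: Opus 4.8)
The plan is to establish the two inclusions separately: the first is routine, and the second rests on an application of compactness to the quantifier-free diagram of the putative substructure. For the easy direction, suppose $A$ is an $L$-substructure of some $B$ with $B\models T$. Then $B$ satisfies every logical consequence of $T$, in particular every sentence of $T_{\forall}$; and since universal sentences are inherited by substructures, $A\models T_{\forall}$ as well, so $A\in\mathbb{M}(T_{\forall})$. Thus every $L$-substructure of a model of $T$ lies in $\mathbb{M}(T_{\forall})$.

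For the converse inclusion I would fix $A\models T_{\forall}$ and produce a model $B$ of $T$ containing an isomorphic copy of $A$ as an $L$-substructure. Passing to the language $L(A)$ obtained by adjoining a fresh constant $\widehat{a}$ for each $a\in A$, let $\text{diag}(A)$ denote the set of all atomic and negated atomic $L(A)$-sentences true in $A$ under the interpretation $\widehat{a}\mapsto a$. It suffices to show that $T\cup\text{diag}(A)$ is satisfiable: any model $B^{*}$ of it has an $L$-reduct $B\models T$, and the map $a\mapsto\widehat{a}^{B^{*}}$ embeds $A$ into $B$ as an $L$-substructure, injectivity being guaranteed by the inequations $\widehat{a}\neq\widehat{a}'$ for $a\neq a'$ which belong to $\text{diag}(A)$.

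To obtain satisfiability I would invoke compactness. Were $T\cup\text{diag}(A)$ inconsistent, some finite conjunction $\varphi=\varphi_{1}\wedge\cdots\wedge\varphi_{n}$ of members of $\text{diag}(A)$ would already be inconsistent with $T$. Since $\varphi$ is quantifier-free, it has the shape $\psi(\widehat{a}_{1},\dots,\widehat{a}_{m})$ for a quantifier-free $L$-formula $\psi(x_{1},\dots,x_{m})$; and because the constants $\widehat{a}_{j}$ occur neither in $T$ nor in $L$, the inconsistency of $T\cup\{\psi(\widehat{a}_{1},\dots,\widehat{a}_{m})\}$ forces $T\models\forall x_{1}\cdots\forall x_{m}\,\neg\psi(x_{1},\dots,x_{m})$. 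That sentence is universal and in $L$, hence lies in $T_{\forall}$ and therefore holds in $A$ --- contradicting $A\models\psi(\widehat{a}_{1},\dots,\widehat{a}_{m})$, which is immediate since the $\varphi_{i}$ lie in $\text{diag}(A)$. So $T\cup\text{diag}(A)$ is consistent and the argument is complete.

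The step to watch --- more a point to state carefully than a genuine obstacle --- is the passage from ``$T\cup\{\psi(\widehat{a}_{1},\dots,\widehat{a}_{m})\}$ inconsistent'' to ``$T\models\forall\overline{x}\,\neg\psi(\overline{x})$''; this is the standard fact that new constants may be replaced by universally quantified variables, and it is precisely here that quantifier-freeness of $\psi$ matters, so that $\forall\overline{x}\,\neg\psi(\overline{x})$ genuinely counts as a universal sentence. No difficulty arises elsewhere: degenerate cases are absorbed automatically, e.g. if $T$ is inconsistent then $T_{\forall}$ already contains unsatisfiable universal sentences such as $\forall x\,(x\neq x)$, so $\mathbb{M}(T_{\forall})=\emptyset$, which matches the empty class of $L$-substructures of models of $T$.
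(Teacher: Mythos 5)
Your proof is correct. Note that the paper itself supplies no argument for this proposition --- it simply cites Hodges --- so there is nothing internal to compare against; your diagram-plus-compactness argument is the standard proof of this classical preservation fact (essentially the one in the cited reference): the easy direction from preservation of universal sentences in substructures, and the converse by showing $T\cup\mathrm{diag}(A)$ is finitely satisfiable, using that the new constants occur nowhere in $T$ so that an inconsistency would put the universal sentence $\forall\overline{x}\,\neg\psi(\overline{x})$ into $T_{\forall}$, contradicting $A\models T_{\forall}$. Your closing remarks correctly flag the only two points needing care: the constant-replacement step requires $\psi$ quantifier-free so the resulting sentence is genuinely universal, and the conclusion is ``substructure up to isomorphism,'' which is how the model-class statement is to be read.
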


\smallskip

Using Proposition 1, a striaghtforward but tedious verification reveals that
the class of all models of $\mathcal{Q}(H)\cup \text{diag}(H)\cup \{\tau \}$ \
which admit a representation satisfying the Lame Property is first order.
Moreover, since this class is closed under $H$-subgroups, it has (as an
application of Proposition 2) a set $\Phi $ of universal axioms in $L_{0}[H]$%
. Now $\mathbb{M}(\mathcal{Q}(H)\cup \text{diag}(H)\cup \{\tau \})\supseteq ~%
\mathbb{M}(\Phi )$. We would be finished if we could prove equality. It will
suffice to establish the result for finitely generated models. We shall
prove the result more generally for models $G$ of $\mathcal{Q}(H)\cup
\text{diag}(H)\cup \{\tau \}$ $\ $such that the quotient $\overline{G}=G/Z(G)$ is
finitely generated.

\smallskip

\begin{thm}\label{thm1}
Every model of $\mathcal{Q}(H)\cup \text{diag}(H)\cup \{\tau \}$ \ admits a
representation satisfying the Lame Property.
\end{thm}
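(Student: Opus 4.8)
The plan is to reduce, via the paper's earlier observations, to models $G$ of $\mathcal{Q}(H)\cup\mathrm{diag}(H)\cup\{\tau\}$ whose central quotient $\overline{G}=G/Z(G)$ is finitely generated, and then to build by hand a locally residually-$\mathbb{Z}$ ring $R$ together with an $H$-embedding $G\hookrightarrow UT_3(R)$ for which the Lame Property is visibly satisfied. The first step is to understand the structure of such a $G$: since $G$ embeds in some $UT_3(R_1)$ with $R_1$ locally residually-$\mathbb{Z}$ (by the result of \cite{GS2}), we get coordinate functions $g\mapsto(g_{12},g_{23},g_{13})$, the map $g\mapsto(g_{12},g_{23})$ induces $\overline{G}\hookrightarrow R_1^2$, and the commutator $[g,h]$ has upper-right entry $g_{12}h_{23}-h_{12}g_{23}$, an alternating bilinear form. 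The hypothesis $\tau$ translates, as the excerpt already notes, into the statement that whenever $y\in C_G(a_2)$, $x\in C_G(a_1)$ and $[y,x]=1$ then $y_{12}x_{23}=0$ forces $y_{12}=0$ or $x_{23}=0$; equivalently, in the notation of the Lemma, conditions (1.) and (2.) fail only trivially. The goal is to manufacture a new ring in which these products genuinely cannot vanish.

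Next I would isolate the ``dangerous'' data. Let $Y_G=\{\,y_{12}: y\in C_G(a_2)\setminus Z(G)\,\}$ and $X_G=\{\,x_{23}: x\in C_G(a_1)\setminus Z(G)\,\}$; these are subsets of $R_1\setminus\{0\}$, and $\tau$ says precisely that $Y_G\cdot X_G\subseteq\{0\}$ never happens nontrivially — i.e. no nonzero element of $Y_G$ annihilates a nonzero element of $X_G$ and vice versa (and more, applying $\tau$ internally). The strategy is then: pass to the ring $R_0$ generated by the entries of elements of $G$ (appropriate for $G$), and kill all relevant zero-divisor relations by a localization/quotient maneuver. Concretely, I would consider the multiplicative subset or the prime filtration of $R_0$: because $G/Z(G)$ is finitely generated, $R_0$ is a finitely generated $\mathbb{Z}$-algebra, hence Noetherian, so it has finitely many minimal primes $\mathfrak{p}_1,\dots,\mathfrak{p}_m$. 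The plan is to choose a minimal prime $\mathfrak p$ cleverly — or to take the diagonal embedding $R_0\hookrightarrow\prod_i R_0/\mathfrak p_i$ into a finite product of domains — and then replace $R$ by $R_0/\mathfrak p$ (a domain, hence locally $\omega$-residually-$\mathbb{Z}$) in such a way that $G$ still embeds $H$-compatibly. The key point to check is that the composite $G\to UT_3(R_0)\to UT_3(R_0/\mathfrak p)$ remains injective; this is where $\tau$ must be used, to argue that no nonzero element of $G$ (equivalently, no nonzero coordinate datum distinguishing two elements of $G$) lies in the kernel, because $\tau$ forbids exactly the zero-divisor coincidences that would be needed to collapse two elements of $C_G(a_1)\cup C_G(a_2)$ together, and a Mal'cev/big-powers argument extends control from the centralizers to all of $G$.

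The hardest step, and the heart of the proof, is precisely this injectivity claim: showing that among the finitely many minimal primes of $R_0$ there is one whose residue domain receives $G$ faithfully, or more robustly, that one may enlarge $R_0$ first (adjoining indeterminates to separate the zero-divisor relations, as in the $\mathbb{Z}\times\mathbb{Z}\rightsquigarrow\mathbb{Z}[\theta]$ example in the excerpt) so that the resulting representation is already over a domain. I expect the mechanism to be: use $\tau$ to show the ideal generated by ``bad products'' $\{y_{12}x_{23}\}$ meets no coordinate class of $\overline G$, invoke the Noetherian structure and the characterization of $\omega$-residually-$\mathbb{Z}$ domains recalled above, and possibly invoke Propositions 1 and 2 (together with the remark about dropping GCH) to reduce an a priori infinitary separation problem to the finitely generated case already arranged. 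Once faithfulness over a domain $R$ is secured, the Lemma's conditions (1.) and (2.) hold automatically — a nonzero element of a domain is never a zero divisor — so the representation $G\leq_H UT_3(R)$ satisfies the Lame Property, completing the proof.
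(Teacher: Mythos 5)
There is a genuine gap at what you yourself identify as the heart of the argument: you never actually produce the domain (or Lame) representation, and the mechanism you propose for it fails. Your plan is to take the subring $R_{0}$ generated by the entries of $G$, use that it is Noetherian, and pass to a quotient $R_{0}/\mathfrak{p}$ by a well-chosen minimal prime, arguing that $\tau$ forces the composite $G\to UT_{3}(R_{0})\to UT_{3}(R_{0}/\mathfrak{p})$ to stay injective. But $\tau$ is a statement about commutation inside $G$; it does not prevent distinct elements of $G$ from being collapsed by a ring quotient. The paper's own example already defeats this: take $G=\langle a_{1},a_{2},b\rangle\leq_{H}UT_{3}(\mathbb{Z}\times\mathbb{Z})$ with $b_{23}=e_{1}=(1,0)$. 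This $G$ satisfies $\tau$ (every $3$-generator model of $\mathcal{Q}(H)\cup \text{diag}(H)$ is a model of $Th_{\forall}(H)$), the entry ring is $\mathbb{Z}\times\mathbb{Z}$, and its two minimal primes give quotients $e_{1}\mapsto 1$ and $e_{1}\mapsto 0$, under which $b$ is identified with $a_{1}$ or with $1$ respectively; neither composite is injective. The correct repair there is not a quotient of $R_{0}$ at all but a re-representation over a different ring $\mathbb{Z}[\theta]$, and your fallback remark about ``enlarging $R_{0}$ by adjoining indeterminates'' is precisely the statement to be proved in general; you give no construction or argument for it. (A secondary issue: $R_{0}$ need not be a finitely generated $\mathbb{Z}$-algebra even when $G/Z(G)$ is finitely generated, since the $(1,3)$-entries of central elements are unconstrained, so the Noetherian reduction is also unjustified as stated.)

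The paper's proof is organized quite differently and supplies exactly the missing mechanism. Propositions 1 and 2 are used not to reduce an infinitary separation problem but to show that the class of models of $\mathcal{Q}(H)\cup \text{diag}(H)\cup\{\tau\}$ admitting a representation with the Lame Property has a set $\Phi$ of \emph{universal} axioms in $L_{0}[H]$; membership in this class can then be transferred along universal equivalence and inherited by $H$-subgroups. With that in hand one inducts on the $C$-rank $\text{rank}(\overline{C_{1}})+\text{rank}(\overline{C_{2}})-1$, where $\overline{C_{i}}=C_{G}(a_{i})/Z(G)$. In the base case the relevant $(1,2)$- and $(2,3)$-entries are nonzero integers, which are never zero divisors in a locally residually-$\mathbb{Z}$ ring, so every representation is Lame. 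In the inductive step one enlarges $G$ to $\widehat{G}$ by adjoining central elements, exhibits $\widehat{G}$ as the free rank-$1$ extension (in $\mathcal{N}_{2}$) of a centralizer of a subgroup $G_{0}$ of strictly smaller $C$-rank, uses a big-powers discriminating family of retractions $\widehat{G}\to G_{0}$ to conclude $\widehat{G}$ and $G_{0}$ are universally equivalent, so $\Phi$ passes from $G_{0}$ (inductive hypothesis) to $\widehat{G}$, and finally descends to $G\leq\widehat{G}$ because $\Phi$ is universal. Your proposal contains none of this inductive and model-theoretic structure, and the ring-theoretic shortcut it substitutes does not go through.
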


\smallskip

\begin{cor}
$\mathcal{Q}(H)\cup \text{diag}(H)\cup \{\tau \}$ $\ $is an axiomatization for $Th_{\forall }(H)$.
\end{cor}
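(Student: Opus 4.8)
The plan is to derive the Corollary from Theorem~\ref{thm1} together with the reductions and constructions already assembled in this section, so that the only genuine work beyond Theorem~\ref{thm1} is to make the chain construction precise. First note that $\mathcal{Q}(H)$, $\text{diag}(H)$ and $\tau$ are all universal sentences of $L_{0}[H]$ true in $H$: the first two by definition, and $\tau$ because $H$ satisfies NZCT and $\tau$ was shown in Section~2 to be a consequence of $\mathcal{Q}(H)\cup\text{diag}(H)\cup\{\text{NZCT}\}$. Hence $\mathcal{Q}(H)\cup\text{diag}(H)\cup\{\tau\}\subseteq Th_{\forall}(H)$, so every model of $Th_{\forall}(H)$ already models these three sentences; it therefore suffices to prove the converse, namely that every model $G$ of $\mathcal{Q}(H)\cup\text{diag}(H)\cup\{\tau\}$ is a model of $Th_{\forall}(H)$.

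So fix such a $G$. By Theorem~\ref{thm1}, $G$ admits a representation $G\leq_{H}UT_{3}(R_{1})$, with $R_{1}$ locally residually-$\mathbb{Z}$, satisfying the Lame Property. Starting from this representation I would run the extension procedure sketched above: for each commutator $[g_{2},g_{1}]$ — working through all the groups produced along the way, with enough transfinite bookkeeping — adjoin solutions to the systems $S$ and $T$, producing a chain $G=G^{(0)}\leq G^{(1)}\leq\cdots$ of models of $\mathcal{Q}(H)\cup\text{diag}(H)\cup\{\tau\}$. The point of having a Lame representation at each stage is exactly the preservation of $\tau$ across that stage's extension, and Theorem~\ref{thm1} is what guarantees such a representation is available at that (possibly not finitely generated) stage. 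The direct union $E$ of the chain is a model of $\mathcal{Q}(H)\cup\text{diag}(H)$ (universal sentences persist in direct unions), it satisfies $\sigma$ because the construction has been arranged so that every $S$ and every $T$ eventually acquires a solution, and it satisfies $\tau$ for the same persistence reason. By the result of \cite{GS4} recalled above, $E\cong UT_{3}(R)$ for some locally residually-$\mathbb{Z}$ ring $R$; and since $UT_{3}(R)$ satisfies $\tau$, the argument given earlier forces $R$ to be an integral domain, hence locally $\omega$-residually-$\mathbb{Z}$.

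Now transport back to $G$. Write $G$ as the direct limit of the subgroups $G\cap UT_{3}(R_{0})$ as $R_{0}$ ranges over the finitely generated subrings of $R$. Each such $R_{0}$ is a finitely generated subring of the integral domain $R$, and $R$ is locally residually-$\mathbb{Z}$, so $R_{0}$ is a residually-$\mathbb{Z}$ integral domain, i.e.\ $\omega$-residually-$\mathbb{Z}$; thus the retractions $R_{0}\rightarrow\mathbb{Z}$ discriminate $R_{0}$ and induce $H$-retractions $G\cap UT_{3}(R_{0})\rightarrow H$ discriminating $G\cap UT_{3}(R_{0})$. A group that is discriminated by $H$ through $H$-homomorphisms models every universal sentence of $L_{0}[H]$ true in $H$, since an assignment witnessing the negation of such a sentence could be pushed forward into $H$ along a suitable retraction; hence each $G\cap UT_{3}(R_{0})$ is a model of $Th_{\forall}(H)$, and, universal sentences persisting in direct unions once more, $G$ is a model of $Th_{\forall}(H)$. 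Together with the inclusion of the first paragraph this gives $\mathbb{M}(\mathcal{Q}(H)\cup\text{diag}(H)\cup\{\tau\})=\mathbb{M}(Th_{\forall}(H))$, i.e.\ $\mathcal{Q}(H)\cup\text{diag}(H)\cup\{\tau\}$ axiomatizes $Th_{\forall}(H)$.

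The one step that needs genuine care is the chain construction: the single-step extension (adjoin one solution to one $S$ or one $T$) must be iterated not merely over the commutators of $G$ but over the commutators of every group produced along the way, with bookkeeping — iterate $\omega$ times, take the union, and repeat, or index everything by a sufficiently long ordinal — chosen so that when the process closes off, $\sigma$ holds throughout $E$; and at each step one must reinvoke Theorem~\ref{thm1} to know that the current intermediate model carries \emph{some} Lame representation, not necessarily the one it was just built inside, which is precisely what licenses preserving $\tau$ across the step. Everything else — the identification $E\cong UT_{3}(R)$, the passage from discrimination to universal equivalence, and the two direct-limit arguments — is routine given the machinery already in place, so the real weight of the Corollary rests on Theorem~\ref{thm1}.
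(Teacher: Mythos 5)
Your proposal is correct and follows essentially the same route as the paper: the easy inclusion $\mathcal{Q}(H)\cup\mathrm{diag}(H)\cup\{\tau\}\subseteq Th_{\forall}(H)$, then Theorem~\ref{thm1} combined with the section's machinery --- the Lame Property licensing each $S$- or $T$-extension while preserving $\tau$, the chain/direct-union construction yielding an overgroup $UT_{3}(R)$ satisfying $\sigma$ and $\tau$, hence $R$ an integral domain, and finally discrimination of each $G\cap UT_{3}(R_{0})$ by retractions onto $H$ plus preservation of universal sentences in direct unions. Your explicit remarks about the bookkeeping in the chain and about reinvoking Theorem~\ref{thm1} at each (possibly non--finitely-generated) stage simply make precise what the paper leaves as a sketch before the theorem's statement.
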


\begin{cor}
$\mathcal{Q}(H)\cup \text{diag}(H)\cup \{NZCT\}$ is an axiomatization for $Th_{\forall }(H)$.
\end{cor}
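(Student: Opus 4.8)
The plan is to reduce this corollary to the preceding one, which already shows that $\mathcal{Q}(H)\cup\text{diag}(H)\cup\{\tau\}$ axiomatizes $Th_{\forall}(H)$. Write $\Sigma=\mathcal{Q}(H)\cup\text{diag}(H)\cup\{NZCT\}$ and $\Sigma'=\mathcal{Q}(H)\cup\text{diag}(H)\cup\{\tau\}$. I would prove the two model-class inclusions $\mathbb{M}(\Sigma)\subseteq\mathbb{M}(\Sigma')$ and $\mathbb{M}(\Sigma')\subseteq\mathbb{M}(\Sigma)$ and then read off the result from the preceding corollary.

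For $\mathbb{M}(\Sigma)\subseteq\mathbb{M}(\Sigma')$ I would invoke exactly the implication recorded in Section~2: any model $G$ of $\mathcal{Q}(H)\cup\text{diag}(H)$ satisfies $C_G(a_1)\cap C_G(a_2)=Z(G)$ by the quasi-identity displayed there, and adjoining NZCT then forces $\tau$. Hence every model of $\Sigma$ is a model of $\Sigma'$.

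For the reverse inclusion I would use the preceding corollary, which gives $\mathbb{M}(\Sigma')=\mathbb{M}(Th_{\forall}(H))$. The one genuinely new point is that NZCT itself belongs to $Th_{\forall}(H)$, i.e. that $H$ satisfies NZCT. This is an elementary verification: NZCT is a universal sentence of $L_0\subseteq L_0[H]$, and for a noncentral $g\in H=UT_3(\mathbb{Z})$ one has $(g_{12},g_{23})\neq(0,0)$, so a short computation in the integral domain $\mathbb{Z}$ shows that any two elements of $C_H(g)$ commute; thus $C_H(g)$ is abelian and $H\models NZCT$. Consequently, if $G$ is a model of $\Sigma'$, then $G$ is a model of $Th_{\forall}(H)$ and in particular $G\models NZCT$; since $G$ already satisfies $\mathcal{Q}(H)\cup\text{diag}(H)$, it is a model of $\Sigma$.

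Combining the inclusions yields $\mathbb{M}(\Sigma)\subseteq\mathbb{M}(\Sigma')=\mathbb{M}(Th_{\forall}(H))\subseteq\mathbb{M}(\Sigma)$, so all three classes coincide and $\Sigma$ axiomatizes $Th_{\forall}(H)$. I expect no serious obstacle here, since all the depth has been spent on Theorem~\ref{thm1} and the preceding corollary; the only things requiring care are the trivial check that $H\models NZCT$ and the discipline of verifying both inclusions rather than merely the implication $NZCT\Rightarrow\tau$. It is worth noting that the argument also shows, as a byproduct, that over $\mathcal{Q}(H)\cup\text{diag}(H)$ the sentences NZCT and $\tau$ have exactly the same models, even though NZCT is the logically stronger assertion.
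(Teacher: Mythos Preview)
Your proposal is correct and matches the paper's intended (implicit) argument: the paper states Corollary~2 without a separate proof, relying on exactly the two facts you isolate --- the Section~2 observation that over $\mathcal{Q}(H)\cup\text{diag}(H)$ the sentence NZCT entails $\tau$, and Corollary~1 identifying $\mathbb{M}(\Sigma')$ with $\mathbb{M}(Th_{\forall}(H))$. Your explicit verification that $H\models\text{NZCT}$ via a matrix/integral-domain computation is a nice touch; the paper simply takes this as known (it is the case $c=2$ of the result from \cite{FGMRS} quoted in Section~5).
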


\smallskip

\begin{proof}[Proof of Theorem~\ref{thm1}] 
Let $G$ be a model of $\mathcal{Q}(H)\cup \text{diag}(H)\cup \{\tau \}$. We may assume without loss of generality
that the quotient $\overline{G}=G/Z(G)$ is finitely generated. Let $%
\overline{C_{i}}=C_{G}(a_{i})~/Z(G),~i=1,2$. Since $\langle \overline{a_{i}}~\rangle =\langle a_{i}Z(G)\rangle \subseteq \overline{C_{i}}$, $\overline{C_{i}}$ has finite rank at least $1$, $i=1,2$.

 
 Thus, $\text{rank}(\overline{C_{1}})+\text{rank}(\overline{C_{2}})\geq 2$.
Define the \emph{$C$-rank} of $G$ to be%
\begin{equation*}
\text{rank}(\overline{C_{1}})+\text{rank}(\overline{C_{2}})-1.
\end{equation*}
The proof will proceed by induction on the $C$-rank.

 
 Suppose first that the $C$-rank of $G$ is $1$. That forces
\begin{equation*}
\text{rank}(\overline{C_{1}})=1=\text{rank}(\overline{C_{2}})
\end{equation*}
and $\overline{C_{i}}~=~\langle \overline{a_{i}}~\rangle =\langle
a_{i}Z(G)\rangle $, $i=1,2$. Let $G\leq _{H}UT_{3}(R)$ be any representation
where $R$ is locally residually-$\mathbb{Z}$ .

 
 It follows from the above that every element of $\
C_{G}(a_{2})\backslash Z(G)$ looks like $%
\begin{bmatrix}
1 & m & \ast \\ 
0 & 1 & 0 \\ 
0 & 0 & 1%
\end{bmatrix}%
$ where $m\in \mathbb{Z}\backslash \{0\}$ and every element of $%
C_{G}(a_{1})\backslash Z(G)$ looks like $%
\begin{bmatrix}
1 & 0 & \ast \\ 
0 & 1 & n \\ 
0 & 0 & 1%
\end{bmatrix}%
$ where $n\in \mathbb{Z}\backslash \{0\}$.

 
 Now, for each $k\in \mathbb{Z}\backslash \{0\}$, the quasi-identity $%
\forall x~((kx=0)\rightarrow (x=0))$ holds in $\mathbb{Z}$ . Hence, these
quasi-identities hold in $R$ and consequently every representation of $G$
satisfies the Lame Property. The initial step of the induction has been
established.

 
 Now suppose $G$ has $C$-rank~$n>1$ and the result has been established
for models with $C$-rank $k$ with $1\leq k<n$.

 
 Now let $G\leq _{H}UT_{3}(R)$ be any representation of $G$ where $R$
is locally residually-$\mathbb{Z}$ . Let us extend $G$ to $\widehat{G}$ by
adjoining the elements $%
\begin{bmatrix}
1 & 0 & r \\ 
0 & 1 & 0 \\ 
0 & 0 & 1%
\end{bmatrix}%
$ as $r$ varies over $R$. Since $\tau $ depends on the $(1,2)$ and $(2,3)$
entries only and since $\widehat{G}$ has the same $C$-rank as $G$, we may
replace $G$ with $\widehat{G}$. This causes no harm since universal
sentences are preserved in subgroups; so $G$ will be a model of $\Phi $
whenever $\widehat{G}$ is.

 
 Since the $C$-rank of $\widehat{G}$ is greater than $1$ at least one
of 
\begin{equation*}
\overline{C_{i}}=C\widehat{_{G}}(a_{i})/Z(\widehat{G})~,~i=1,2
\end{equation*}
must have rank at least $2$. We may assume $\text{rank}(\overline{C_{2}})=m\geq 2$.
Choose elements $a_{2},b_{1},...,b_{m-1}~\in C_{\widehat{G}}(a_{2})$ which
project modulo $Z(\widehat{G})$ to a basis for $\overline{C_{2}}$ . Now let $%
N$ be the subgroup $\langle b_{m-1}\rangle \cdot Z(\widehat{G})$ of $%
\widehat{G}$ . Since $[\widehat{G},\widehat{G}]\leq Z(\widehat{G})\leq N$, $%
N $ is normal in $\widehat{G}$ and $\widehat{G}/N$ is abelian. Let $T$ be a
transversal for $N$ in $\widehat{G}$ so that every element $g$ is uniquely
expresses in the form $t(g)\upsilon (g)$ where $t(g)\in T$ and $\upsilon
(g)\in N$. Assume further that, for all $(p,q,z)\in \mathbb{Z}^{2}\times Z(%
\widehat{G})$, $~t(a_{1}^{p}a_{2}^{q}z)=a_{1}^{p}a_{2}^{q}$ and that $t(x)=1$
for all $x\in N$.

 
 Let $G_{0}$ be the subgroup of $\widehat{G}$ generated by the coset
representatives of $N$ in $\widehat{G}$ together with $Z(\widehat{G})$ .
Since $b_{m-1}$ is killed off the $C$-rank of $G_{0}$ is less than that of $%
\widehat{G}$ and so, by inductive hypothesis, $G_{0}$ is a model of $\Phi $.

 
 Examining the general form of a matrix in $\widehat{G}$ and setting
that guy equal to the identity matrix, we see that, modulo the law $%
[x_{1},x_{2},x_{3}]=1$, the only relations in $\widehat{G}$ are consequences
of the relations in $G_{0}$ and the relations $[C_{G_{0}}(a_{2}),b_{m-1}]=1$%
. That is, $\widehat{G}$ is the free rank $1$ extension of $C_{G_{0}}(a_{2})$
relative to $\boldsymbol{N}_{2}$ and hence $G_{0}$ and $\widehat{G}$ are
universally equivalent. Then $\widehat{G}$ is a model of $\Phi $ since $%
G_{0} $ is. Since $G\leq \widehat{G}$ and universal sentences are preserved
in subgroups, $G$ is a model of $\Phi $. That completes the induction and
proves the theorem. \end{proof}

\medskip

\section{The Theory in the Base Language}

\medskip

 
 We wish to ponder whether or not $Q^{0}(H)\cup \{NZCT\}$ axiomatizes $%
Th_{\forall }^{0}(H)$. To that end let $G_{0}$ be a model of $Q^{0}(H)\cup
\{NZCT\}$. We may assume $G_{0}$ is finitely generated. Suppose that $G_{0}$
is abelian. Then, since models of $Q^{0}(H)$ are torsion free, $G_{0}$ is
free abelian of finite rank $r$. Choose a positive integer $n$ such that $%
\left( 
\begin{array}{c}
n \\ 
2%
\end{array}%
\right) \geq $ ~max$\{r,2\}$. If $G=F_{n}(\mathcal{N}_{2})$ then $[G,G]$ is
free abelian of rank $\left( 
\begin{array}{c}
n \\ 
2%
\end{array}%
\right) $. It follows that $G_{0}$ embeds in $G$ . So every universal
sentence of $L_{0}$ true in $G$ must also be true in $G_{0}$. But $G$ is
universally equivalent to $H$. Therefore $G_{0}$ is a model of $Th_{\forall
}^{0}(H)$. So it now suffices to assume $G_{0}$ is a finitely generated
nonabelian model of $Q^{0}(H)\cup \{NZCT\}$. A consequence of a result of Gr%
\"{a}tzer and Lasker \cite{GL} is that the quasivariety generated by $H$ consists
of all groups isomorphic to a subgroup of a direct product of a family of
ultrapowers of $H$. View $H$ as $UT_{3}(\mathbb{Z})$ and taking
corresponding direct product of ultrapowers of $\mathbb{Z}$ we get a ring $R$
such that $G_{0}$ embeds in $UT_{3}(R)$. Since quasi-identities are
preserved in direct products and ultrapowers, $R$ is a model of the
quasi-identities true in $\mathbb{Z}$. That is $R$ is locally residually-$%
\mathbb{Z}$. Further we may take $R$ to be generated by the entries of a
fixed finite set of generators for $G_{0}$. Thus, $R$ may be taken finitely
generated. Therefore $R$ is residually-$\mathbb{Z}$ and so separated by the
family of retractions $R\rightarrow \mathbb{Z}$. Let $\overline{G}~$be the
subgroup $\langle G_{0},H\rangle $ of $UT_{3}(R)$. \ The retractions $%
R\rightarrow \mathbb{Z}$ induce group retractions $\overline{G}\rightarrow H$
and these separate $\overline{G}$. It follows that $\overline{G}$ is a model
of $Q(H)\cup \text{diag}(H)$. Let us keep this $\overline{G}~$in $\ $mind as we
move on.

 
 Let us say a group is \emph{$(G_{0},H)$-group} if it contains a
distinguished copy of each of $G_{0}$ and $H$. The meanings of \emph{$(G_{0},H)$-subgroup} and \emph{$(G_{0},H)$-homomorphism} are readily
apparent. A \emph{$(G_{0},H)$-ideal} is the kernel of a $(G_{0},H)$-homomorphism. Equivalently, a $(G_{0},H)$-ideal in a $(G_{0},H)$-group $G$
is a subgroup $K$ normal in $G$ such that $K\cap G_{0}=\{1\}=K\cap H$.

 
 Suppose $I$ is a nonempty index set and $(G_{i})_{i\in I}$ \ is a
family of $(G_{0},H)$-groups indexed by $I$. Let $G$ be the direct product $%
\dprod\limits_{i\in I}~G_{i}$. We have the diagonal embeddings%
\begin{eqnarray*}
\alpha &:&G_{0}\rightarrow G \\
\alpha (g)(i) &=&g~~\text{for all }g\in G_{0}~,~i\in I
\end{eqnarray*}%
and%
\begin{eqnarray*}
\beta &:&H\rightarrow G \\
\beta (h)(i) &=&h~\text{ for all }h\in H~,~i\in I.
\end{eqnarray*}%
We view $G$ as a $(G_{0},H)$-group using these diagonal embeddings. Let $%
\dcoprod $ be the $\mathcal{N}_{2}$ free product (See \cite{N}). Let $%
G_{1},~G_{2},~$and $G$ be $2$-nilpotent groups. Let $\varphi
_{i}:~G_{i}\rightarrow G$ be a homomorphism\ $\ i-1,2$. Then there is a
unique homomorphism $\varphi :G_{1}\dcoprod G_{2}\rightarrow G$ such that 
\begin{equation*}
\varphi \mid _{G_{i}}=\varphi _{i}~,~i=1,2\text{.}
\end{equation*}%
Getting back to $\overline{G}$ and letting $\Gamma =G_{0}\dcoprod H$ we see
there is a unique $(G_{0},H)$-homomorphism $\varphi :\Gamma \rightarrow 
\overline{G}$ . Note $\varphi $ is surjective since $G_{0}$ and $H$ generate 
$\overline{G}$ . If $K=Ker(\varphi )$, then $K$ is a $(G_{0},H)$-ideal in $%
\Gamma $ such that $\Gamma /K$ is a model of $Q(H)$. Hence the set $\mathbb{K%
}$ of all $(G_{0},H)$-ideals, $K$, in $\Gamma $ such that $\Gamma /K$ is a
model of $Q(H)$ is nonempty. Let $K_{0}=\dbigcap\limits_{K\in \mathbb{K}}~K$%
. Let $U_{H}(G_{0})=\Gamma /K_{0}$.

 
 We call $U_{H}(G_{0})$ the \emph{universal $H$-extension of $%
G_{0}$}. We claim that $U_{H}(G_{0})$ is a $(G_{0},H)$-group which is a
model of $Q(H)$ and that if $G$ is any $(G_{0},H)$-group which is a model of 
$Q(H),$ then there is a unique $(G_{0},H)$-homomorphism $U_{H}(G_{0})%
\rightarrow G$. Put another way, we claim that $U_{H}(G_{0})$ is an initial
object in the category whose objects are $(G_{0},H)$-groups which are models
of $Q(H)$ and whose morphisms are $(G_{0},H)$-homomorphisms.

 
 From $K\cap G_{0}=\{1\}=K\cap H$ for all $K\in \mathbb{K}$ we get $%
K_{0}\cap G_{0}=1=K_{0}\cap H$ as $K_{0}\leq K$ for all $K\in \mathbb{K}$ .
Thus, $K_{0}$ is a $(G_{0},H)$-ideal and $U_{H}(G_{0})$ is a $(G_{0},H)$%
-group.

 
 We get a $(G_{0},H)$-homomorphism%
\begin{equation*}
\varphi :\Gamma \rightarrow \dprod\limits_{K\in \mathbb{K}}~(\Gamma /K)~~%
\text{via}
\end{equation*}%
$\varphi (\gamma )=(\gamma K)_{K\in \mathbb{K}}$ \ \ for all $\gamma \in
\Gamma $. $K_{0}=~Ker(\varphi )$. It follows that $U_{H}(G_{0})$ isomorphic
to the image of $\varphi $. Since quasi-identities of $L_{0}[H]$ are
preserved in direct products and $H$-subgroups, $U_{H}(G_{0})$ is a model of 
$Q(H)$.

 
 Now suppose $G$ is any $(G_{0},H)$-group which is a model of $Q(H)$.
Since $\Gamma =G_{0}\dcoprod H$ we get a unique homomorphism $\pi :\Gamma
\rightarrow G$ which restricts to the identity on each of $G_{0}$ and $H$.
Then $Ker(\pi )\in \mathbb{K}$ and $\pi $ induces $\ \overline{\pi }%
:U_{H}(G_{0})\rightarrow G$. Since every homomorphism is determined by its
effect on a generating set and $\Gamma =G_{0}\dcoprod H$, $\overline{\pi }$
is unique. Our claims have been established.


\begin{qu}
\label{qn1}
Is NZCT preserved in $U_{H}(G_{0})$?
\end{qu}


 
 If so, $U_{H}(G_{0})$ would be a model of $Th_{\forall }(H)$ and
hence a model of $Th_{\forall }^{0}(H)$. Since universal sentences are
preserved in subgroups, we would have $G_{0}$ a model of $Th_{\forall
}^{0}(H)$.

 
 We observe that 
\begin{equation*}
G_{0}\cap Z(U_{H}(G_{0}))\leq Z(G_{0}).
\end{equation*}%
We further observe that $Z(G_{0})$ coinciding with $G_{0}\cap
Z(U_{H}(G_{0})) $ is a necessary condition for Question~\ref{qn1} to have a positive
answer. Recall we are taking $G_{0}$ nonabelian. Let $g_{1}$ and $g_{2}$ \
be noncommuting elements of $G_{0}$ and $g\in Z(G_{0})\backslash (G_{0}\cap
Z(U_{H}(G_{0})))$, then since $g_{1}$ and $g_{2}$ each commute with $g\notin
Z(U_{H}(G_{0}))$ , \ \smallskip NZCT would be violated.


\begin{qu}\label{qn2}
Is $Z(G_{0})=G_{0}\cap Z(U_{H}(G_{0}))$?
\end{qu}


\section{Questions}


Let $c\geq 2$ be an integer. Let $r=\max \{2,c-1\}$. Let $s$ be
any integer such that $s\geq r$. Let $G=F_{s}(\mathcal{N}_{c})$. Then $%
F_{\omega }(\mathcal{N}_{c})$ is discriminated by the family of retractions $%
F_{\omega }(\mathcal{N}_{c})\rightarrow G$. It follows that the $F_{n}(%
\mathcal{N}_{c})$ have the same universal theory relative to $L_{0}$ for all 
$n\geq r$ and that the $F_{n}(\mathcal{N}_{c})$ have the same universal
theory relative to $L_{0}[G]$ for all $n\geq s$ (See [GS 1]). We let $S_{c}$
be the set of all universal sentences of $L_{0}$ true in $F_{n}(\mathcal{N}%
_{c})$. Since $F_{n}(\mathcal{N}_{c})\leq F_{r}(\mathcal{N}_{c})$ for all $%
0\leq n\leq r$ and universal sentences are preserved in subgroups, $S_{c}$
is actually the set of all universal sentences of $L_{0}$ true in every free 
$c$-nilpotent group.

 
 For each integer $n\geq 0$ we define $CT(n)$ to be the following
universal sentence of $L_{0}.$%
\begin{eqnarray*}
\forall x_{1},x_{2},x_{3},w_{1},...,w_{n}~((([w_{1},...,w_{n},x_{2}] &\neq
&1)\wedge (([x_{1},x_{2}]=1)\wedge ([x_{2},x_{3}]=1)) \\
&\rightarrow &([x_{1},x_{3}]=1)).
\end{eqnarray*}%
The interpretation of $CT(n)$ in any group $G$ is that the relation of
commutativity is transitive on $G\backslash Z_{n}(G)$ where $Z_{n}(G)$ is
the $n$-th term of the upper central series of $G$.

 
 Equivalently, it asserts that the centralizer of any element $g\in
G\backslash Z_{n}(G)$ is abelian. It was shown in \cite{FGMRS} that the free $c$%
-nilpotent groups satisfy $CT(c-1)$. Questions~\ref{qn3}--\ref{qn6} 
 below are
also due to A.G. Myasnikov.


\begin{qu}\label{qn3}
Does\textbf{\ }$Q(H)\cup \{NZCT\}$ axiomatize $%
S_{2}=Th_{\forall }^{0}(H)$?
\end{qu}


A positive answer to Question~\ref{qn1} would imply a positive answer to Question~\ref{qn3}.


\begin{qu}\label{qn4}
Let $s\geq 2$ and let $G=F_{s}(\mathcal{N}_{2})$. Does $%
Q(G)\cup \text{diag}(G)\cup \{NZCT\}$ axiomatize $Th_{\forall }(G)$?
\end{qu}


More generally -


\begin{qu}\label{qn5}
Let $c\geq 2$ and $r=\max \{2,c-1\}$. Let $G=F_{n}(%
\mathcal{N}_{c})$. Does $Q^{0}(G)\cup \{CT(c-1)\}$ axiomatize $S_{c}$?
\end{qu}


\begin{qu}\label{qn6}
Let $c\geq 2$ and $s\geq r=\max \{2,c-1\}$. Let $%
G=F_{s}(\mathcal{N}_{c})$. Does $Q(G)\cup diag(G)\cup \{CT(c-1)\}$
axiomatize $Th_{\forall }(G)$?
\end{qu}


\begin{qu}\label{qn7}
Let $\theta $ be an indeterminate over $\mathbb{Z}$ .
Must every finitely generated model of $Th_{\forall }(H)$ $H$-embed in $%
UT_{3}(\mathbb{Z}[\theta ])$?
\end{qu}


\bibliographystyle{plain}
\bibliography{references}

\end{document}